\title{Classification of subgroups isomorphic to $\PSL_2(27)$ in the Monster}
\author{
Robert A.~Wilson}
\address{
School of Mathematical Sciences\\
Queen Mary, University of London\\
Mile End Road\\
London E1 4NS}
\date{
19th April 2013}
\begin{document}

\def\FF{{\mathbb F}}
\def\squareforqed{\hbox{\rlap{$\sqcap$}$\sqcup$}}
\def\qed{\ifmmode\squareforqed\else{\unskip\nobreak\hfil
\penalty50\hskip1em\null\nobreak\hfil\squareforqed
\parfillskip=0pt\finalhyphendemerits=0\endgraf}\fi\medskip}

\newcommand{\im}{\mathrm{im}\,}
\newcommand{\udot}{{}^{\textstyle .}}
\newcommand{\diag}{\mathrm{diag}}
\newcommand{\PSL}{\mathrm{PSL}}
\newcommand{\Mat}{\mathrm{M}}
\newcommand{\Aut}{\mathrm{Aut}\,}
\newcommand{\Sz}{\mathrm{Sz}}
\newcommand{\PSU}{\mathrm{PSU}}
\renewcommand{\L}{\mathrm{L}}
\newcommand{\U}{\mathrm{U}}
\newcommand{\Suz}{\mathrm{Suz}}
\newcommand{\Co}{\mathrm{Co}}
\renewcommand{\Mat}{\mathrm{M}}

\newtheorem{theorem}{Theorem}

\maketitle

\begin{abstract}
As part of the problem of the determination of the maximal subgroups of the Monster
we 
classify subgroups isomorphic to $\PSL_2(27)$.
Indeed, we prove that the Monster does not contain any subgroup
isomorphic to $\PSL_2(27)$.
\end{abstract}

\section{Introduction}
\label{intro}
The Monster is the largest of the $26$ sporadic simple groups. The maximal subgroups
of the other $25$ are all known, so it would be satisfying to complete this project
also for the Monster.
The problem of determining the maximal subgroups of the Monster has a long
history (see for example \cite{Anatomy1,Anatomy2,oddlocals,J1,post,L259,A5subs,S4subs,L241}).
The cases left open by previous published work are
normalizers of simple subgroups with trivial centralizer, and isomorphic to one of
the groups
\medskip

\centerline{$\PSL_2(8)$, $\PSL_2(13)$, $\PSL_2(16)$, $\PSL_2(27)$, $\PSU_3(4)$, $\PSU_3(8)$, $\Sz(8)$.}\medskip
Of these, $\PSL_2(8)$ and $\PSL_2(16)$ have been classified in unpublished work
of Holmes. The cases $\PSL_2(27)$ and $\Sz(8)$ are
particularly interesting because no subgroup isomorphic to $\PSL_2(27)$ or $\Sz(8)$
 is known. Here we consider the case $\PSL_2(27)$, and we show that in fact there
is no subgroup isomorphic to $\PSL_2(27)$ in the Monster.

\section{Theoretical results}
\label{theory}
The strategy we use here is the standard one for $\PSL_2(q)$, namely
to classify the possibilities for the $BN$-pair, consisting 
in this case of $B\cong 3^3{:}13$ and $N\cong D_{26}$ intersecting in
the `torus' of order $13$.

First we use the $3$-local analysis to classify subgroups of the Monster
isomorphic to $3^3{:}13$. Since neither $3A$-elements nor $3C$-elements
can form a pure $3^3$ (see \cite{oddlocals}), the $3$-elements in any $3^3{:}13$
must be in
class $3B$, and we know from \cite{oddlocals} that there are just three classes of pure
$3B$-type $3^2$. Clearly all the $3^2$ subgroups of our $3^3$ must be of the same type.

Consider first the case when they are of type $3B_4(i)$ in the notation of \cite{oddlocals}.
Such a $3^2$ has centralizer $3^2.3^5.3^{10}.\Mat_{11}$. It is shown in Theorem 6.5 of \cite{oddlocals}
that there is a unique conjugacy class of such $3^3$. The normalizer of this $3^3$ has the
shape $3^3.3^2.3^6.3^6.(\PSL_3(3)\times SD_{16})$. Hence there is, up to conjugacy, a unique
group $3^3{:}13$ of this type. It has centralizer $3^2{:}D_{8}$, and its normalizer is
a group of shape
$$(3^3{:}(2\times 13{:}3) 
\times 3^2{:}SD_{16}).\frac12=(3^3{:}13{:}3\times 3^2{:}D_8).2,$$ 
that is, a subgroup of index $2$ in
$3^3{:}(2\times 13{:}3)\times 3^2{:}SD_{16}$.

Now this $3^3{:}13$ contains $13A$-elements, and the invertilizer of a $13A$-element
has shape $13{:}2 \times \PSL_3(3)$. Hence there are just 118 copies of $D_{26}$
containing a given element of class $13A$. 
Elementary calculations show that the subgroup $3^2{:}D_8$ of $\PSL_3(3)$ has orbits
of sizes $9+6+6+12+12+36+36$ on the $117$ involutions in $\PSL_3(3)$, and in particular
has no regular orbit.
It follows that any $\PSL_2(27)$ of this type
would have non-trivial centralizer. This is a contradiction, as none of the element-centralizers
in the Monster contains $\PSL_2(27)$.

Next consider the cases $3B_4(ii)$ and $3B_4(iii)$. By Propositions 6.1 and 6.2 of \cite{oddlocals},
the whole of the $3^3$ lies inside a unique $3^{1+12}$. Therefore $3^3{:}13$ lies inside
$3^{1+12}\udot2\udot \Suz{:}2$. Now in $\Suz{:}2$ the Sylow $13$-normalizer has the shape $13{:}12$.
Therefore the $13$-element normalizes just four subgroups of shape $3^3$ in the $3^{1+12}$,
and these are permuted by the $13$-normalizer. It follows that there is a unique class of $3^3{:}13$
of this type in the Monster. Such a subgroup has centralizer of order $3$, and normalizer of shape
$3 \times 3^3{:}(2\times 13{:}3)$.

To summarise the results of this section, we have proved the following.
\begin{theorem}
\label{3313theorem}
There are exactly two conjugacy classes of subgroups isomorphic to  $3^3{:}13$ in the Monster, one of
which contains $13A$-elements, while the other contains $13B$-elements.
\end{theorem}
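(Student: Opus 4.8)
The plan is to run the $3$-local analysis of \cite{oddlocals} to pin down the normal subgroup $3^3$, and then count the essentially distinct ways a cyclic group of order $13$ can act on it. First I would note that in any $3^3{:}13$ the $13$-element acts faithfully and irreducibly on $3^3$ (it is the natural module, $\FF_{27}^+$ acted on by $\FF_{27}^\ast/\{\pm1\}$), so there is no nonzero proper invariant subspace; in particular the $13$ permutes the thirteen subgroups of order $9$ without fixed points, hence transitively, so all $3^2 \le 3^3$ lie in a single Monster class. Since a pure $3^3$ of $3A$-elements or of $3C$-elements is impossible by \cite{oddlocals}, the $3$-elements of any $3^3{:}13$ lie in class $3B$, and there are exactly three classes of pure $3B$-type $3^2$, namely $3B_4(i)$, $3B_4(ii)$, $3B_4(iii)$. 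It therefore suffices to show that case $(i)$ produces exactly one class of $3^3{:}13$ and that cases $(ii)$ and $(iii)$ together produce exactly one further class, and to determine which class of $13$-elements each contains.

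For case $(i)$ I would start from the centralizer $3^2.3^5.3^{10}.\Mat_{11}$ of a $3B_4(i)$-pair and invoke Theorem~6.5 of \cite{oddlocals}, which gives a unique class of such $3^3$ with normalizer $N$ of shape $3^3.3^2.3^6.3^6.(\PSL_3(3)\times SD_{16})$. A $13$-element normalizing $3^3$ must map to an element of order $13$ in $N/C_M(3^3) \le \mathrm{GL}_3(3)$; since $\mathrm{GL}_3(3)$ has a single class of cyclic subgroups of order $13$ acting irreducibly, and $N$ induces enough of $\mathrm{GL}_3(3)$, all such $3^3{:}13$ are conjugate in $N$, giving a unique class. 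Its centralizer is $3^2{:}D_8$ and its normalizer has the stated shape; identifying the $13$-element by the local structure — matching the shape $13{:}2\times\PSL_3(3)$ of the $13A$-invertilizer — shows it lies in class $13A$.

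For cases $(ii)$ and $(iii)$ I would use Propositions~6.1 and 6.2 of \cite{oddlocals} to force the whole $3^3$ into a unique extraspecial group $3^{1+12}$, so that $3^3{:}13$ embeds in $3^{1+12}\udot2\udot\Suz{:}2$. Inside $\Suz{:}2$ the Sylow $13$-normalizer has shape $13{:}12$; hence a fixed $13$-element normalizes exactly four subgroups $3^3$ of the $3^{1+12}$ (those covering the $13$-invariant totally isotropic subspaces of the $12$-dimensional module over $\FF_3$), and the $13$-normalizer permutes these four transitively. So a single Monster class of $3^3{:}13$ arises here, with centralizer of order $3$ and normalizer $3\times3^3{:}(2\times13{:}3)$, and its $13$-element lies in class $13B$. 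Comparing the $13$-element classes shows the class from $(i)$ and the class from $(ii)$--$(iii)$ are distinct, so there are exactly two in all.

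The step I expect to be the main obstacle is establishing, in each case, that ``a $3^3$ normalized by a $13$-element'' really gives a single conjugacy class: this amounts to controlling how the relevant order-$13$ element acts on the pertinent three-dimensional $\FF_3$-space (in $\mathrm{GL}_3(3)$ for case $(i)$, via the symplectic module of $3^{1+12}$ inside $\Suz{:}2$ for cases $(ii)$, $(iii)$), and checking that the large $3$-local normalizer, respectively the $13$-normalizer in $\Suz{:}2$, is transitive enough on the resulting configurations. A secondary but delicate point is the identification of the Monster class of the $13$-element in each family, which I would handle by matching orders and centralizer/invertilizer shapes rather than by any single clean computation.
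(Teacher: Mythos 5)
Your proposal follows essentially the same route as the paper: force the $3$-elements into class $3B$, split into the three pure $3B$-type $3^2$ cases, treat $3B_4(i)$ via the unique $3^3$ of Theorem~6.5 of \cite{oddlocals} and its normalizer (yielding the $13A$ class), and treat $3B_4(ii)$, $(iii)$ via the unique $3^{1+12}$ and the Sylow $13$-normalizer $13{:}12$ in $\Suz{:}2$ (yielding the $13B$ class). The extra details you supply (transitivity on the thirteen $3^2$'s, the Sylow/$\mathrm{GL}_3(3)$ argument for uniqueness in case $(i)$) are correct fillings-in of steps the paper leaves implicit, so the proof is sound and matches the paper's.
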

\begin{theorem}
\label{13Atheorem}
There is no $\PSL_2(27)$ in the Monster containing $13A$-elements.
\end{theorem}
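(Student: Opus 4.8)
The plan is to feed the $3$-local information assembled above, together with Theorem~\ref{3313theorem}, into the standard $BN$-pair analysis. Suppose for contradiction that $G\le\Mat$ with $G\cong\PSL_2(27)$ and that $G$ contains $13A$-elements. Fix a $13A$-element $x\in G$ and a Borel subgroup $B\cong 3^3{:}13$ of $G$ with $x\in B$ (every element of order $13$ in $\PSL_2(27)$ lies in a torus of order $13$, hence in a Borel subgroup). By Theorem~\ref{3313theorem} the $3^3{:}13$ subgroups containing $13A$-elements form a single conjugacy class of the Monster, namely the one of type $3B_4(i)$; after conjugating I may therefore assume $B$ is the fixed representative of that class, so that $C_\Mat(B)=3^2{:}D_8$. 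Since $x\in B$ we have $C_\Mat(B)\le C_\Mat(x)=13\times\PSL_3(3)$, and as $|3^2{:}D_8|=72$ is coprime to $13$ this $3^2{:}D_8$ lies in the $\PSL_3(3)$ direct factor of $N_\Mat(\langle x\rangle)=13{:}2\times\PSL_3(3)$.

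Next I would reduce the centralizer of $G$ to a centralizer computation inside $\PSL_3(3)$. Put $T=\langle x\rangle$ and $N=N_G(T)\cong D_{26}$; since $|B|=351$ is odd, $N\not\le B$, and as $B$ is maximal in $G$ we get $G=\langle B,N\rangle=\langle B,t\rangle$ for any involution $t\in N$, whence
\[
C_\Mat(G)=C_\Mat(B)\cap C_\Mat(t).
\]
The involution $t$ inverts $x$, so $t\in N_\Mat(T)$; writing $t=(a,b)$ according to the decomposition $N_\Mat(T)=13{:}2\times\PSL_3(3)$, the component $a$ is one of the $13$ involutions of $13{:}2\setminus T$ and $b\in\PSL_3(3)$ satisfies $b^2=1$. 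Letting $t$ vary recovers the $118$ subgroups $D_{26}$ containing $x$: each is determined by $b$, which ranges over $\{1\}$ together with the $117$ (mutually conjugate, with centralizer of order $48$) involutions of $\PSL_3(3)$. Because $C_\Mat(B)$ lies in the $\PSL_3(3)$ factor, the intersection $C_\Mat(B)\cap C_\Mat(t)$ is precisely the centralizer $C_{3^2{:}D_8}(b)$ of the element $b$ in this copy of $3^2{:}D_8\le\PSL_3(3)$.

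It then remains to show $C_{3^2{:}D_8}(b)\ne1$ for every admissible $b$. For $b=1$ this is immediate. For $b$ an involution one computes — this is the concrete step, an elementary finite-group calculation — that $3^2{:}D_8$ acts on the $117$ involutions of $\PSL_3(3)$ by conjugation with orbit lengths $9,6,6,12,12,36,36$; since $72$ is not among these lengths there is no regular orbit, so orbit--stabilizer gives $|C_{3^2{:}D_8}(b)|\in\{8,12,6,2\}$, in particular non-trivial. Hence $C_\Mat(G)\ne1$ in every case, so $G$ is contained in $C_\Mat(z)$ for some $z\ne1$. Taking $z$ of prime order and using that the centralizers of elements of prime order in the Monster are known explicitly, and that none of them contains a subgroup isomorphic to $\PSL_2(27)$, we obtain a contradiction and the theorem follows.

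Beyond the finite computation of the $3^2{:}D_8$-orbits on the involutions of $\PSL_3(3)$, I expect the main obstacle to be the intermediate bookkeeping: verifying that $C_\Mat(B)$ genuinely sits inside the $\PSL_3(3)$ factor of $N_\Mat(\langle x\rangle)$, and that intersecting further with $C_\Mat(t)$ collapses to centralizing the single (possibly trivial) involution $b$ of $\PSL_3(3)$ attached to $t$. Everything else is a matter of quoting Theorem~\ref{3313theorem}, the known structure of $13A$-normalizers, and the known list of prime-order element centralizers in the Monster.
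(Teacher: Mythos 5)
Your proposal is correct and follows essentially the same route as the paper: the unique class of $3^3{:}13$ with $13A$-elements has centralizer $3^2{:}D_8$ sitting inside the $\PSL_3(3)$ factor of the $13A$-normalizer, and the absence of a regular orbit of $3^2{:}D_8$ on the $117$ involutions of $\PSL_3(3)$ (orbits $9+6+6+12+12+36+36$) forces any such $\PSL_2(27)$ to have non-trivial centralizer, which is impossible. You merely spell out the bookkeeping (the $118$ copies of $D_{26}$, $C_{\Mat}(G)=C_{\Mat}(B)\cap C_{\Mat}(t)=C_{3^2{:}D_8}(b)$) that the paper leaves implicit.
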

\section{Computational strategy}
\label{strategy}
At this point we resorted to computer calculations to finish the job. The original calculations
were done about ten years ago, using the mod $2$ construction 
of the Monster \cite{Mmod2}, but subsequently
lost. The calculations were therefore repeated, as described here, using the mod $3$
construction \cite{2loccon}. The general methods of computation are described in
\cite{2loccon,post,A5subs}, and summarized in \cite{L241}, which also contains some
improved methods. As in these references, we take $a,b$ as generators for the 
subgroup $2^{1+24}\udot\Co_1$, and $T$ as a `triality element', cycling the $3$
central involutions in a subgroup $2^2.2^{11}.2^{22}.\Mat_{24}$ of 
$2^{1+24}\udot\Co_1$.

By Theorems~\ref{3313theorem} and \ref{13Atheorem} we have reduced to
the case in which $3^3{:}13$ contains $13B$-elements. The invertilizer 
of a $13B$-element is
$13^{1+2}{:}4A_4$, and therefore
there are exactly 78 copies of $D_{26}$ containing a given
$13B$-element.

We break the calculations down into a number of steps. First we make the part of the
$13$-normalizer that we can easily find inside $2^{1+24}\udot\Co_1$. This is done in 
Section~\ref{13.12A4}, where we obtain a group $13{:}(3 \times 4A_4)\cong
(13{:}3\times 2A_4){:}2$. Then in Section~\ref{chpost1}
we pick a non-central involution in this group, and find an element of the Monster
conjugating it to the central involution. This allows us in Section~\ref{findsecond13}
to find another element of order $13$ commuting with the first one, thereby
extending the subgroup to $13^{1+2}{:}(3\times 4A_4)$.
This group contains all the involutions which extend $13$ to $D_{26}$.

Then in Section~\ref{find27} we find an element of order $3$ which, together
with our original element of order $13$, generates $3^3{:}13$.
Finally, in Section~\ref{finale} we 
show that under the action of the normalizer of $3^3{:}13$ the $78$ ways of extending
$13$ to $D_{26}$ fall into six orbits. Then we test the resulting $6$
 cases to see if the
given element of order $3$ in $3^3{:}13$, multiplied by one of the $13$ involutions
in $D_{26}$, gives an element of order $3$. This is a criterion which distinguishes
$\PSL_2(27)$ from all other groups.

\section{Finding $(13{:}3\times 2A_4){:}2$}
\label{13.12A4}
The strategy here is to work first in $\Co_1$, to find enough of the centralizer of a
$2B$-element to obtain a group $2^2\times G_2(4)$. Then we conjugate one of the
central involutions to the other,
in such a way that we obtain 
$A_4\times G_2(4)$. Within this subgroup
we find a copy of $\PSL_2(13)$ by random search, and then a copy of $13{:}6$
inside $\PSL_2(13)$. Finally we apply the standard method known 
colloquially as `applying the formula' in order to lift to elements 
of $2^{1+24}\udot\Co_1$ which normalize
a particular element of order $13$ we choose.

\subsection{Constructing $A_4\times G_2(4)$ in $\Co_1$}
\label{mkA4G24}
We take $a,b$ to be the original pair of generators of $2^{1+24}\udot \Co_1$,
and first work in the quotient $\Co_1$ to make the element
$$c_1=(ab)^4(ab^2)^3$$
of order $26$, so that 
$$i_1=(c_1)^{13}$$
is an element of class $2B$ in $\Co_1$. We make
$$c_2=abi_1[ab,i_1]^5$$
which centralizes $i_1$,
and let $$i_2=(c_2)^{13}.$$
The elements $c_1,c_2$ then generate $2^2\times G_2(4)$, in which the
central $2^2$ is generated by $i_1,i_2$.
Then let 
\begin{eqnarray*}
n_1&=& (ai_1)^5(ab)^{-2}i_2(ab)^2a(ab)^{-2}\cr
n_2&=&(ai_1)^5(i_1i_2a)^5
\end{eqnarray*}
to give elements which normalize the $2^2$ and give us a group $A_4\times G_2(4)$.

The normal subgroup $A_4$ is generated by 
\begin{eqnarray*}
a_1&=&i_1\cr 
a_2&=&(n_1n_2)^{13},
\end{eqnarray*}
while the normal $G_2(4)$ is generated by 
\begin{eqnarray*}
g_1&=&(c_1)^2\cr
g_2&=&(n_1n_2)^{3}.
\end{eqnarray*}

\subsection{Constructing a subgroup $A_4\times 13{:}6$}
\label{G24-78}
We then make standard generators of $G_2(4)$, as defined in
\cite{webatlas}, as
\begin{eqnarray*}
g_3&=&(g_1^4g_2)^4\cr
g_4&=&((g_1g_2g_1g_2^2)^3)^{g_2^4}
\end{eqnarray*}
and generators for a subgroup $\PSL_2(13)$ 
can then be read off from \cite{webatlas} as
\begin{eqnarray*}
g_5&=&((g_3g_4)^3g_4)^3((g_3g_4)^4g_4g_3g_4(g_3g_4^2)^2)^3((g_3g_4)^3g_4)^{-3}\cr
g_6&=&(g_3g_4g_3g_4^2)^{-2}(g_3g_4(g_3g_4g_3g_4^2)^2)^5(g_3g_4g_3g_4^2)^2
\end{eqnarray*}
Inside here we find that a subgroup $13{:}6$ is generated by $g_5$ and
$$g_7= (g_6)^{g_5g_6^2},$$
and we may take the element of order $13$ to be
$$g_8=[g_5,g_7].$$
\subsection{Lifting to $2^{1+24}\udot\Co_1$}
\label{form78A4}
Now we `apply the formula' to lift to $2^{1+24}\udot\Co_1$. 
That is, we replace the elements $a_1,a_2,g_5,g_7$ by new elements, in the same cosets
of $2^{1+24}$, which normalize the subgroup $\langle g_8\rangle$ of order $13$.
Specifically, we make
\begin{eqnarray*}
a_1'&=&g_8a_1(g_8a_1^{-1}g_8a_1)^6\cr
a_2'&=&(g_8a_2(g_8a_2^{-1}g_8a_2)^6)^2\cr
g_5'&=&g_8g_5(g_8^{-1}g_5^{-1}g_8g_5)^6\cr
g_7'&=&(g_8g_7(g_8^9g_7^{-1}g_8g_7)^6)^2
\end{eqnarray*}
so that $a_1',a_2'$ generate $2A_4$ and $g_5',g_7'$ generate 
$26\udot6=(2\times 13{:}3)\udot 2)$,
commuting with each other. Thus they together generate
$$2\udot(A_4\times 13{:}6).$$
An element of order $12$ normalizing $\langle g_8\rangle$ 
and commuting with $\langle a_1',a_2'\rangle\cong 2A_4$ and with $g_5'$ may be obtained as
$$g_9=g_5'g_8^3g_7'g_8^{10}.$$

\section{Changing post}
\label{chpost1}
The process of `changing post' really consists of two parts. The first part consists of
finding a word $x$ in the generators of the Monster, which conjugates a given involution
in $C(z)$, to $z$. This part is more or less algorithmic, especially as, in this case,
we already in \cite{L241}
found a word conjugating an involution in the desired $C(z)$-conjugacy class, to $z$.
Here the involution which we want to map to $z$ is $a_1'g_5'$.

The second part consists of `shortening the word' for an element
$g^x$, where both $g$ and $g^x$ lie in $C(z)$. This part is more \emph{ad hoc}, and 
involves often quite laborious search for a word in $a$ and $b$ which is equal to the
desired element. 
In this section, the element we want to write as a word in $a$ and $b$
is the appropriate conjugate of the element $g_9$ of order $12$. Our strategy is to
first find a word for its image in $\Co_1$, and then to lift to $2^{1+24}\udot\Co_1$.
Even within $\Co_1$, the search is not easy, and we perform it in stages,
first dealing with the involution which is its sixth power, and then its fourth power,
before finally reaching the element itself. In the course of these calculations,
we also identify two useful elements which centralize the given element of
order $12$.

\subsection{Conjugating the involution $a_1'g_5'$ to $z$}
\label{changepost}
The next step in extending to the full $13$-normalizer $13^{1+2}{:}(3\times 4S_4)$
is to conjugate the involution $i_3=a_1'g_5'$ to $z$. First we make our `standard'
involution in this conjugacy class in $2^{1+24}\udot\Co_1$ as follows.
As in \cite{L241} we make
\begin{eqnarray*}
h&=&(ab)^{34}(abab^2)^3(ab)^6\cr
i&=&(ab^2)^{35}((ababab^2)^2ab)^4(ab^2)^5\cr
k_1&=&hihi^2\cr
k_2&=&hihihi^2\cr
k&=&(k_1k_2)^3k_2k_1k_2
\end{eqnarray*}
Then we make
$$k'=((a^2)^{(ab)^3}k^8)^{11}k^{11}$$
as our standard involution in this conjugacy class. This element is carefully
chosen so that $T^{-1}k'T$ is an element of the normal $2^{1+24}$.

We calculate once and for all how to conjugate this element to $z$. This calculation
was already done in \cite{L241}, and the result is that
if
$$k_3=(ab)^3(ab^2)^{20}(ababab^2abab^2)^8(ababab^2ab)^{12}(ababab^2)^5$$
then $$(k')^{Tk_3T}=z.$$

It remains now to conjugate $i_3$ to $k'$.
Now if two elements of $\Co_1$-class $2C$ have product of order $13$ or $35$,
then this product is fixed-point-free in its action on $2^{24}=2^{1+24}/2$, and
hence when we lift to $2^{24}\udot\Co_1$ the product remains of odd order. Thus we
can conjugate one to the other in $2^{24}\udot\Co_1$ using the standard formula.
Lifting to $2^{1+24}\udot\Co_1$ is then easy.
So we search for conjugates of $i_3$ and $k'$ whose product has order $13$ or $35$,
and thereby find that if
$$l_3=(ab^2)^4(k'(i_3)^{(ab^2)^4})^6$$
then $l_3$ conjugates $i_3$ to $k'$, and therefore $l_3Tk_3T$ conjugates $i_3$ to $z$.

\subsection{Finding the centralizer of $(g_9')^2$ in $\Co_1$}
\label{mk3ah}
This conjugation takes the element $g_9$ to an element 
$$g_9'=g_9^{l_3Tk_3T}$$
which has order $12$ in the quotient $\Co_1$. We now want to find this element
as a word in $a,b$, so as to eliminate the occurrences of $T$.
This  is by no means a simple process. In this subsection we obtain a word
for an element which is congruent to $(g_9')^2$ modulo $2^{1+24}$.

First note that $g_9^6=z$, so that $(g_9')^6$ is (modulo $2^{1+24}$)
in the normal $2^{11}$ subgroup of the standard copy $\langle h,i\rangle$ of
$2^{11}{:}\Mat_{24}$. By a random search we find a subgroup $2^{11}{:}\Mat_{12}$
centralizing $(g_9')^6$, generated by
\begin{eqnarray*}
t_1&=&(i^2)^{(hi^2)^6(hi)^{-6}}\cr
t_2&=&(i^2)^{(hi^2)^5(hi)^{-13}}\cr
t_3&=&(i^2)^{(hihihi^2hi)^3(hi)^{-2}}
\end{eqnarray*}
Moreover, the central involution of this group is 
$$t_0=(t_1t_3t_1t_3t_1t_3^2)^{11}.$$

Then we conduct another random search in this subgroup for elements
commuting with the element $(g_9')^4$ of order $3$. Writing
\begin{eqnarray*}
u&=&t_1\cr
v&=&t_2t_3\cr
t_4&=&((uv)^4)^{(uvuvuv^2uvuv^2)^9(uvuvuv^2uv)^{10}}\cr
t_5&=&((uv)^4)^{(uvuvuv^2)^{10}(uvuvuv^2uvuv^2)^2}\cr
t_6&=&(t_4)^{t_4t_5t_4t_5t_4t_5^2(t_4t_5)^7}
\end{eqnarray*}
we have that $t_6$ is in fact the inverse of $(g_9')^4$, modulo $2^{1+24}$.
\subsection{Finding the centralizer of $g_9'$ in $\Co_1$}
\label{mk3ij}
Working first in the $\Mat_{12}$ quotient of $\langle t_1, t_2t_3\rangle$ we find
that the following elements commute with $t_6$ modulo the $2$-group:
\begin{eqnarray*}
t_8&=&(t_1)^{((t_1t_2t_3)^3t_2t_3)^3(t_2t_3)^5}\cr
t_9&=&(t_1)^{((t_1t_2t_3)^3t_2t_3(t_1t_2t_3)^2t_2t_3)^7(t_2t_3)^6}
\end{eqnarray*}
Applying the formula we obtain
\begin{eqnarray*}
t_8'&=& t_6^2t_8t_6^2t_8^2t_6^2t_8\cr
t_9'&=& t_6^2t_9t_6^2t_9^2t_6^2t_9
\end{eqnarray*}
and then
\begin{eqnarray*}
t_{10}&=& (t_9't_8't_9')^3
\end{eqnarray*}
is congruent to $(g_9')^3$, modulo $2^{1+24}$. We also make
some elements commuting with $g_9'$ modulo the $2$-group, as follows:
\begin{eqnarray*}
t_{11}&=& ((t_1t_3)^8t_6^2)^3t_8'(t_9')^2t_{10}\cr
t_{12}&=& (t_8')^2((t_1t_3)^8t_6^2)^3(t_8')^2(t_9')^2
\end{eqnarray*}

\subsection{Lifting to $2^{1+24}\udot\Co_1$}
\label{liftg9}
Now we know that $t_{10}t_6$ is congruent modulo $2^{1+24}$ to the
inverse of $(g_9')^{l_3Tk_3T}$. It remains to find the correct element of $2^{1+24}$
to multiply by. Using the method explained in \cite{L241}, we obtain the element
$$w=t_{10}t_6p_1d_3d_4d_5d_7d_8d_9d_{10}d_{11}d_1p_3p_4p_6p_7p_8.$$

We lift the elements $t_{11},t_{12}$ to elements which commute with $w$,
by the following method.
First apply the formula, to get elements $t_{11}', t_{12}'$ which commute
with $w^4$:
\begin{eqnarray*}
t_{11}'&=&w^4t_{11}w^4t_{11}^{-1}w^4t_{11}\cr
t_{12}'&=& w^4t_{12}w^4t_{12}^{-1}w^4t_{12} 
\end{eqnarray*}
Then make the part of $2^{1+24}$ which commutes with $w^4$: 
by computing $w^4$ in the $24$-dimensional $\mathbb F_2$-representation
of $\Co_1$, we find that this is generated by
\begin{eqnarray*}
q_1&=&d_9d_{12}\cr
q_2&=&d_1d_4d_5d_6d_{10}d_{11}\cr
q_3&=&d_3d_5d_7d_{10}d_{12}\cr
q_4&=&d_2d_6d_7d_8d_{10}d_{12}\cr
q_5&=&p_4p_6p_9p_{12}d_5d_8d_{10}\cr
q_6&=&p_3p_6p_7p_9d_4d_7d_{12}\cr
q_7&=&p_2p_5p_6p_7p_{10}p_{12}d_7d_{11}\cr
q_8&=&p_1p_7p_8p_9p_{10}p_{12}d_4d_8
\end{eqnarray*}
where $p_1,\ldots,p_{12},d_1,\ldots,d_{12}$ are the generators for
$2^{1+24}$ given in \cite{L241}.
Finally test all multiples of $t_{11}'$ and $t_{12}'$ by products of the $q_i$.
We find the following elements which commute with $w$:
\begin{eqnarray*}
t_{11}''&=&q_5q_6q_8t_{11}'\cr
t_{12}''&=&q_4q_5q_6q_7t_{12}'
\end{eqnarray*}
Note also that $w$ commutes with $q_2q_3q_4$, and modulo the central involution, also with
$q_4$.

\section{Finding the full $13B$-centralizer}
\label{findsecond13}
In order to extend $(13{:}3\times 2A_4){:}2$ to
$13^{1+2}{:}(3\times 2A_4){:}2$,
we now seek an element of order $13$ which is normalized by $w$. First we work
in the quotient $\Co_1$, and afterwards lift to $2^{1+24}\udot\Co_1$.

\subsection{Extending $12$ to $13{:}12$ in $\Co_1$}
\label{find13}
Now the element $t_{11}$ maps to a $2B$-involution in the quotient $\Co_1$,
and the element of order $13$ we are looking for centralizes \emph{either}
this involution, \emph{or} $t_0t_{11}$. But conjugating by $t_{12}$
interchanges these two cases,
so we can assume the former.
We therefore begin by making the centralizer of $t_{11}$ in the quotient $\Co_1$.
Let
\begin{eqnarray*}
h_5&=& ((at_{11})^6t_0t_6)^4\cr
h_6&=& (t_0t_6(at_{11})^6)^4
\end{eqnarray*}
which are elements of order $21$ generating 
$G_2(4)$ in this centralizer. We search for a subgroup $\PSL_2(13)$
containing $t_6$, and find that $\langle h_7,t_6\rangle$ is such a subgroup, where
$$h_7= ((h_5h_6h_5h_6^2)^5) ^{h_5h_6^7}.$$
Inside this copy of $\PSL_2(13)$, we find an element of order $13$
$$h_8'= (h_7t_6^2)^4t_6^2h_7t_6^2(h_7t_6^4)^2$$
and the one normalized by $t_6$ is
$$h_8= (h_8')^{h_7(h_8')^{10}}.$$

Then we work with the centralizer of $t_6$ in $G_2(4)$
to conjugate this $13$-element to one which
is normalized also by $t_{10}$. We first make this centralizer by a random
search through $3A$-elements of $G_2(4)$ to find some which commute.
We found
\begin{eqnarray*}
h_{10} &=& ((h_5h_6h_5h_6h_5h_6^2h_5h_6)^7)^{h_5^{18}h_6^{10}}\cr
h_{11} &=& ((h_5h_6h_5h_6h_5h_6^2h_5h_6)^7)^{h_5^{18}h_6^{15}}
\end{eqnarray*}
which generate $A_5$. Conjugating by random elements of this centralizer
we quickly find one of the $13$-elements we are looking for, namely
$$h_{12}=(h_{10}h_{11}^2h_{10})^4 h_8h_{10}h_{11}^2h_{10}.$$

\subsection{Lifting}
\label{lift13}
The main lifting problem is to lift the element of order $13$ to one which is
normalized by $w$. 
Since there are $2^{24}$ elements of order $13$ in the given coset of $2^{1+24}$,
only two of which are normalized by $w$, a brute force search is out of the
question (or at least, unwieldy).
We therefore do this in two stages, first finding an appropriate
conjugating element to get a $13$-element inverted by $w^6$. Since $w^6$
centralizes just $2^{12}$ out of the $2^{24}$ factor, this divides the problem
into two searches, each in a population of size $2^{12}$.

First we work in the $24$-dimensional $\mathbb F_2$-representation of $\Co_1$
and find that the fixed space of $w^6$ is spanned by vectors which lift to
the following elements of $2^{1+24}$:
all even products of the $d_i$, together with $$d_1p_1p_3p_6p_8p_{10}p_{12}.$$
Hence in the first search we may test conjugates just by the $p_i$. We find that
the correct conjugating element is $$p_1p_3p_5p_6p_7p_{12}.$$

In the second search we test conjugates by $d_id_{i+1}$ and
 $d_1p_1p_3p_6p_8p_{10}p_{12}$. We find that exactly two conjugating elements work:
\begin{eqnarray*}
&&d_1d_2d_3d_5d_8d_9d_{11}p_1p_3p_6p_8p_{10}p_{12}\cr
&&d_1d_3d_5d_6d_7d_9d_{10}d_{11}d_{12}p_1p_3p_6p_8p_{10}p_{12}.
\end{eqnarray*}
Let $h_{12}'$ and $h_{12}''$ be the
respective conjugates of $h_{12}$.

\subsection{Finding the $12$-normalizer}
\label{norm12}
In order to obtain all possible $13$-elements normalized by our element of order $12$,
we need to conjugate not just by elements of its centralizer, but by elements of
its normalizer. Indeed it turns out that we need to replace $w$ by its $7$th power.

Such a normalizing element can be found inside the centralizer of $w^3$ as follows.
First we conjugate $a_1'$ to $t_{11}'$, by conjugating by $(t_{11}'a_1')^2$, so that
we have the full $A_4\times G_2(4)$ available. In particular, the element
$$(t_{11}'a_1')^3(a_2'^2a_1'a_2')(t_{11}'a_1')^2$$
is an involution in the $A_4$, but not equal to $t_{11}'$.
(In the end, we found we did not need to use this element.)

Within the $A_5$ generated by $h_{10}, h_{11}$ we make the $15$ involutions
as conjugates, by powers of $h_{10}h_{11}$, of $(h_{10}h_{11})^2h_{11}$ 
and $h_{11}h_{10}h_{11}^2h_{10}$ and their product. We find that
the first involution conjugated by $(h_{10}h_{11})^2$ commutes with $w$,
and therefore 
the normalizing element we want is (modulo the $2$-group)
$$t_{14}=(t_{11}'a_1')^3(a_2'^2a_1'a_2')(t_{11}'a_1')^2
(h_{10}h_{11})^3(h_{11}h_{10}h_{11}^2h_{10})(h_{10}h_{11})^2.$$

To lift to $2^{1+24}\udot\Co_1$, 
we first apply the formula to get an element which commutes with $w^4$:
$$t_{14}'= w^4t_{14}w^4t_{14}^3w^4t_{14}.$$
Finally multiplying by combinations of the $q_i$ we find that the element
we want can be taken to be
$$t_{14}''=q_3q_7q_8t_{14}'.$$

We also made an element $t_{13}'$ which conjugates $w$ to its 5th power, but this
turned out not to be necessary.
 
\subsection{Testing commuting with the first $13$-element}
\label{test13s}
We are aiming to find the normalizer of $g_8$, so we have to test our candidate
elements of order $13$ to see which one(s) commute with $g_8^{l_3Tk_3T}$.
The candidate elements are conjugates of $h_{12}'$ and $h_{12}''$ by
combinations of $t_{11}'', t_{12}'', t_{13}', t_{14}''$.
Of these, we found that
the one which works is
$$w_1=(h_{12}'')^{t_{11}''t_{14}''}.$$

We now have generators for $13^{1+2}{:}(3\times 4A_4)$. These are
best taken as the generators $a_1', a_2', g_5', g_7'$ given above, together with
the conjugate of $w_1$ by $(l_3Tk_3T)^{-1}$, that is
$$w_1'=l_3Tk_3Tw_1T^{-1}k_3^{-1}T^{-1}l_3^{-1}.$$

\section{Finding $3^3{:}13$}
\label{find27}
The element $g_8$ of order $13$ lies inside a subgroup $6\udot\Suz$ of 
$2^{1+24}\udot\Co_1$.
Now $6\udot\Suz$ also lies in a (unique) subgroup $3^{1+12}\udot 3\udot \Suz$, of index $2$
in a maximal subgroup $3^{1+12}\udot 3\udot \Suz{:}2$ of the Monster.
If we can find generators for this subgroup, then we can write down
generators for a group $3^3{:}13$ containing $g_8$.

Our strategy is to find such a subgroup $6\udot\Suz$, which may be taken
to be the centralizer in the Monster of the element $za_2'$, 
and then move to the centralizer
of a suitable non-central involution, where we can find an element of order $3$
extending $3 \times 2^{1+6}\udot2\udot \U_4(2)$ to $(3^{1+4}{:}2\times 2^{1+6})\udot
\U_4(2)$, and thereby
extending $6\udot\Suz$ to $3^{1+12}\udot2\udot\Suz$. It is then easy to write down a word
for the element we want.

\subsection{The subgroup $6\udot\Suz$}
\label{6Suz}
The element
$$s_1=abababab^2abab^2ab^2$$
has order $66$, so $s_1^{22}$ is conjugate to $a_2'$. In the quotient $\Co_1$,
the elements $s_1^{22}$ and $a_2'$ generate a subgroup $A_4$, and
$a_2's_1^{22}$ conjugates $s_1^{22}$ to $a_2'$ modulo the $2$-group.
Let $s_1'$ be $s_1$ conjugated by $a_2's_1^{22}$. Then, modulo the
$2$-group, both $s_1'$ and $c_1^2$ commute with $a_2'$, and generate $3\udot\Suz$.

Hence, applying the formula, we get the following elements
centralizing $a_2'$, and generating $6\udot\Suz$:
\begin{eqnarray*}
s_1''&=&a_2's_1a_2's_1^{-1}a_2's_1\cr
s_2&=& a_2'c_1^2a_2'c_1^{-2}a_2'c_1^2
\end{eqnarray*}

\subsection{Changing post again}
\label{chpost2}
The element $$j_2=(s_2^2s_1'')^6$$ turns out to be an involution mapping to $\Co_1$-class
$2A$, and forming a $2^2$-group of Monster-type $2BAB$ with the central
involution $z$ of $2^{1+24}\udot\Co_1$. Our standard involution of this type is
$$j_0=((hi)^4i)^{15},$$ and we can check that $j_0^T$ lies in the $2^{1+24}$.
Moreover, $j_0j_2$ has order $5$, so $$j_3=(j_0j_2)^2$$ conjugates 
$j_2$ to $j_0$.

The usual brute-force approach (used once and for all)
then finds the element
$$j_4=(ab)^{27}(ab^2)^4(abab^2)^4(ababab^2abab^2)^{13}(ababab^2ab)^9
(ababab^2)^4$$
such that
$$(j_0)^{Tj_4T^{-1}}=z.$$
Hence
$$j_3Tj_4T^{-1}$$
conjugates $j_2$ to 
$z$. 
\subsection{Identifying the element of order $3$}
\label{ident3}
Writing $$j_5=((s_1'')^{22})^{j_3Tj_4T^{-1}}$$
we want to find words for the centralizer of the element $zj_5$
of order $6$. This centralizer is a group of shape
$$(2^{1+6}\times 3^{1+4}{:}2)\udot \U_4(2).$$
In particular, we want to find a non-central element of the normal $3^{1+4}$.

 We begin as usual in the $\Co_1$ quotient, and look for $3A$-elements
which commute with $j_5$. Writing
\begin{eqnarray*}
j_6&=&(ab)(ab^2)^6(ababab^2ab)^9(ababab^2)^9\cr
j_7&=&(ab)(ab^2)^8(ababab^2ab)^{11}(ababab^2)^4\cr
j_8&=&(ab)^3(ab^2)^5(ababab^2ab)^{13}(ababab^2)^6\cr
j_9&=&(ab)^3(ab^2)^{30}(ababab^2ab)^{10}(ababab^2)^4\cr
j_{10}&=&(ab)^6(ab^2)^{17}(ababab^2ab)^{12}(ababab^2)^9
\end{eqnarray*}
we have that $j_n'=(a_2')^{j_n}$ is such a $3A$-element for $n\in\{6,7,8,9,10\}$.
Moreover $$j_5'=(j_6'j_7'j_8'j_9'j_{10}')^{12}$$ is congruent to $j_5$ modulo
the $2$-group. 

To find out which of the $p_n$ and $d_n$ to multiply by, we apply the element 
$j_5(j_5')^{-1}$ to $13$ carefully selected coordinate vectors, as described in
\cite{L241}, and read off the answer from the result.
We find that the correct answer is
$$j_5''=p_1p_6p_8p_9p_{10}p_{11}p_{12}
d_1d_2d_5d_6d_{10}d_{11}d_{12}j_5'.$$
That is, $j_5''$ is actually equal to $j_5$ in the Monster.

\subsection{Finding $3^{1+12}$}
\label{3x13}
Now apply the formula so that we get generators for the centralizer
of $j_5''$ as follows. For $n\in\{6,7,8,9,10\}$, define
$$j_n''=j_5''j_n'j_5''j_n'^{-1}j_5''j_n'$$
Then $j_6''j_7''(j_8''j_9''j_{10}'')^2$ has order $10$ and we find that
$$j=(j_6''j_7''(j_8''j_9''j_{10}'')^2)^5(j_8''j_9''j_{10}''j_6''j_7''j_8''j_9''j_{10}'')^5$$
is an element in the normal $3^{1+4}$ as required.

We now know that, 
under the action of $6\udot\Suz$ generated by
$s_1''$ and $s_2$, the element $$j'=j^{Tj_4^{-1}T^{-1}j_3^{-1}}$$ and its
conjugates generate a group $3^{1+12}$.

\subsection{Finding the right $3^3$}
\label{right27}
 Moreover, the element
$g_8$ of order $13$ acts fixed-point-freely on the natural $3^{12}$ quotient
of $3^{1+12}$.
Elementary linear algebra then tells us that if we want a $3^3$ on which the
minimum polynomial of the action of $g_8$ is $x^3-x-1$, then we compute
$$\frac{x^{13}-1}{(x-1)(x^3-x-1)}=x^9+x^8-x^7+x^5-x^3-x^2-1,$$ and
hence 
(modulo the central $3$) the element
$$j''=j'^{-1}g_8^4j'g_8j'g_8j'^{-1}g_8^2j'g_8^2j'^{-1}g_8j'^{-1}g_8^2$$
is the one we want. In fact, no correction for the centre is required.

\section{Proof of the main theorem}
\label{finale}
\subsection{Analysing the $78$ cases}
\label{testcases}
The $78$ ways of extending $13$ to $D_{26}$ are obtained by taking the $6$
non-central involutions in $4A_4$, and conjugating by suitable elements of
$13^{1+2}$.
First take the involution $a_1'g_5'^3$ and check that
$w_1a_1'g_5'^3$ has order $2$, so that the $13$ conjugates 
of $a_1'g_5'^3$ by powers of $w_1'$ can be 
written
as $$(w_1')^na_1'g_5'^3$$ 
for $0\le n\le 12$.
Then conjugate these $13$ involutions by suitable elements of $\langle a_1',a_2'\rangle
\cong 2A_4$ to get the full set of $78$. For example, we may conjugate in turn by each of 
the six elements
$$1, a_2', a_2'^2, a_2'a_1', a_2'a_1'a_2', a_2'a_1'a_2'^2.$$

However, a subgroup $3\times 2\times 3$ of $13^{1+2}{:}(3\times 4A_4)$
normalizes $3^3{:}13$, and it is easy to see that it fuses the $78$ cases into
$6$ orbits, of lengths $3+3+18+18+18+18$.
These six cases are represented by the cases $(w_1')^na_1g_5'^3$ for
$n=0,1,2$, and conjugates by $a_2'a_1'$.
In each case we perform the following test. Given the fixed generators
$j'',g_8$ for $3^3{:}13$, and the $6$ involutions $x$, test each of the $13$ words
$$j''xg_8^m$$ (for $0\le m\le 12$) on a random vector
to see if it has order $3$. 
Since $j''$ is a word involving exactly $28$ occurrences of $T$ or $T^{-1}$,
and $x$ averages just $4$ such occurrences, 
each test 
involves  on average $96$ applications of $T$ or $T^{-1}$, and a slightly
larger number of applications of elements of $2^{1+24}\udot\Co_1$.
On my rather old laptop, such a test takes around $15$ minutes, and therefore
the total calculation takes around $1.5$ hours.

\subsection{Proofs} Most of the computer calculations were performed
without proof, and therefore it is necessary to provide proofs for the
few statements which we actually need in order to prove our
main theorem.

\begin{theorem}
There is no subgroup of the Monster isomorphic to $\PSL_2(27)$.
\end{theorem}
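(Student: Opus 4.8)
The plan is to combine the theoretical reduction already established with a finite—though substantial—computer verification, using the well-known fact that $\PSL_2(27)$ is characterised among the groups it could be confused with by a single multiplicative relation between a $BN$-pair generator and a torus-normalising involution. By Theorems~\ref{3313theorem} and~\ref{13Atheorem} any $\PSL_2(27)$ in the Monster would have to arise from the unique class of $3^3{:}13$ containing $13B$-elements, since $\PSL_2(27)$ has $BN$-pair with Borel $3^3{:}13$ and Weyl-group part $D_{26}$ meeting $B$ in the torus of order $13$. So the whole question reduces to: starting from the distinguished $3^3{:}13$ of $13B$-type, can one of the $78$ involutions inverting the chosen $13B$-element $g_8$ be adjoined so that the resulting group is $\PSL_2(27)$ rather than some other group with the same $BN$-pair data?

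First I would realise the relevant local structure explicitly inside the standard generators $a,b,T$ for the Monster, following the computational strategy of Sections~\ref{13.12A4}--\ref{find27}: build the part of the $13B$-normalizer visible in $2^{1+24}\udot\Co_1$ to get $(13{:}3\times 2A_4){:}2$; change post to conjugate a suitable non-central involution to the central involution $z$, thereby locating a second commuting $13$-element and extending to the full $13B$-centralizer $13^{1+2}{:}(3\times 4A_4)$; then pass to the $6\udot\Suz$ containing $g_8$, extend it to $3^{1+12}\udot2\udot\Suz$ inside the maximal subgroup $3^{1+12}\udot2\udot\Suz{:}2$, and read off an element $j''$ of order $3$ so that $\langle j'', g_8\rangle\cong 3^3{:}13$ with $g_8$ acting on the natural $3^3$ with minimal polynomial $x^3-x-1$. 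This last normalisation is what guarantees that $j''$ and the involutions together \emph{could} generate $\PSL_2(27)$: in $\PSL_2(27)$ the product of the unipotent generator of $B$ with one of the $13$ involutions in $D_{26}$ has order $3$, and this is a relation that fails in every other finite group having the same abstract $B$, $N$ and torus.

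Next I would enumerate the $78$ candidate involutions: take the $6$ non-central involutions of $4A_4\le 13^{1+2}{:}(3\times 4A_4)$ and spread each over its $13$ conjugates under the inner $13^{1+2}$, using that $w_1 a_1' g_5'^3$ has order $2$ so the conjugates by powers of $w_1'$ are just $(w_1')^n a_1' g_5'^3$. Because a subgroup $3\times 2\times 3\le 13^{1+2}{:}(3\times 4A_4)$ normalizes $3^3{:}13$ and fuses the $78$ into $6$ orbits (of lengths $3+3+18+18+18+18$), it suffices to test $6$ representative involutions $x$; for each I would test the $13$ words $j'' x g_8^m$, $0\le m\le 12$, on a random vector to see whether any has order $3$, which is the $\PSL_2(27)$-criterion. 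If none of the $6\times 13$ words yields an element of order $3$, then no extension of $3^3{:}13$ to $\PSL_2(27)$ exists, and by the reduction above the Monster contains no $\PSL_2(27)$. Finally, since most of the search was carried out heuristically, I would re-verify the handful of facts actually needed—the orders of the key products, the identifications of $j''$ and $g_8$ as generating the correct $3^3{:}13$, the fusion of the $78$ cases into $6$—by exact computation, so the conclusion rests only on proven statements.

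The main obstacle is the computational one: not the logical structure, which is short, but the ``changing post'' steps—finding honest words in $a,b$ for elements that are initially only known as words involving the triality element $T$, so that arithmetic becomes feasible—together with keeping the final test words for $j'' x g_8^m$ short enough (here $j''$ involves $28$ occurrences of $T^{\pm1}$, so each order test costs on the order of a hundred applications of $T$ and somewhat more of $2^{1+24}\udot\Co_1$) that the full $6\times 13$ sweep completes in a reasonable time. Everything downstream of obtaining those words is routine linear algebra and order computation in the mod-$3$ construction of the Monster.
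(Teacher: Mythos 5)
Your proposal is correct and follows essentially the same route as the paper: reduce via Theorems~\ref{3313theorem} and~\ref{13Atheorem} to the $13B$-type $3^3{:}13$, build the $13^{1+2}{:}(3\times 4A_4)$ and the element $j''$ computationally, fuse the $78$ inverting involutions into $6$ orbits, and apply the order-$3$ test to the words $j''xg_8^m$, with the key identifications (that $\langle j'',g_8\rangle\cong 3^3{:}13$ with $g_8$ in class $13B$, and that the given elements really generate $13^{1+2}{:}(3\times 4A_4)$ so all inverting involutions are covered) verified rigorously at the end, exactly as in the paper's proof.
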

\begin{proof}
By Theorems~\ref{3313theorem} and \ref{13Atheorem}, there is a unique class
of $3^3{:}13$ which we need to consider. We prove computationally 
that the elements $j''$ and $g_8$ generate a group $3^3{:}13$,
by checking generators and relations on
two vectors whose joint stabilizer in the Monster is known to be trivial. 
Moreover, $g_8$ lies in $2^{1+24}\udot\Co_1$, so is in Monster-class $13B$.

Similarly, we check generators and relations for $(13{:}3\times 2A_4){:}2$.
Also, we check that $w_1'$ is an element of order $13$, and that it commutes
with $g_8$ and with $a_1'g_5$. Since the latter element inverts $g_8$, we deduce that
$w_1'$ is not a power of $g_8$ (or this could be checked directly). Hence
the given elements generate $13^{1+2}{:}(3\times 4A_4)$, as required.

Therefore, the test runs through all the involutions inverting $g_8$, and since
the test failed in every one of the six cases, the proof is complete. 
\end{proof}

\section*{Acknowledgements}

Thanks are due to EPSRC, whose grant, number GR/S41319, paid for the
laptop on which these calculations were performed.


\begin{thebibliography}{99}
\bibitem{Bray} J. N. Bray, { An improved method for generating the centralizer of an
involution}, {\it Arch. Math. (Basel)} {\bf 74} (2000), 241--245.

\bibitem{Atlas}
J. H. Conway, R. T. Curtis, S. P. Norton, R. A. Parker and R. A. Wilson,
{\it An Atlas of Finite Groups}, Oxford University Press, 1985.

\bibitem{S4subs} P. E. Holmes, A classification of subgroups of the
Monster isomorphic to $S_4$ and an application,
{\it J. Algebra} {\bf 319} (2008), 3089--3099.

\bibitem{2loccon} P. E. Holmes and R. A. Wilson, {A new computer construction
of the Monster using $2$-local subgroups}, {\it J. London Math. Soc.} {\bf 67} (2003), 349--364.

\bibitem{post} P. E. Holmes and R. A. Wilson,
{ A new maximal subgroup of the Monster},
{\it J. Algebra} {\bf 251} (2002), 435--447.

\bibitem{L259} P. E. Holmes and R. A. Wilson,
$\PSL_2(59)$ is a subgroup of the Monster,
{\it J. London Math. Soc.} {\bf 69} (2004), 141--152.

\bibitem{A5subs} P. E.  Holmes and R. A. Wilson, {On subgroups of the Monster
containing $A_5$'s,} {\it J. Algebra} {\bf 319} (2008), 2653--2667.

\bibitem{Mmod2} S. A. Linton, R. A. Parker, P. G. Walsh and R. A. Wilson,
Computer construction of the Monster,
{\it J. Group Theory} {\bf 1} (1998), 307--337.

\bibitem{Anatomy1} S. Norton, { Anatomy of the Monster: I,} in
{\it Proceedings of the
{\sc Atlas} Ten Years On conference (Birmingham 1995)}, pp.\ 198--214, Cambridge
Univ.\ Press, 1998.

\bibitem{Anatomy2} S. P. Norton and R. A. Wilson, Anatomy of the Monster: II,
{\it Proc. London Math. Soc.} {\bf 84} (2002), 581--598.

\bibitem{L241} S. P. Norton and R. A. Wilson,
{A correction to the $41$-structure of the Monster, a construction of a new
maximal subgroup $\L_2(41)$, and a new Moonshine
phenomenon},
(19 pages), {\it J. London Math. Soc.}, published online Jan. 2013. doi: 10.1112/jlms/jds078 

\bibitem{oddlocals} R. A. Wilson, The odd-local subgroups of the Monster,
{\it J. Austral. Math. Soc. (A)} {\bf 44} (1988), 1--16.

\bibitem{J1} R. A. Wilson, Is $J_1$ a subgroup of the Monster?,
{\it Bull. London Math. Soc.} {\bf 18} (1986), 349--350.

\bibitem{FSG} R. A. Wilson,
{\it The finite simple groups}, Springer GTM 251, 2009.


\bibitem{webatlas} R. A. Wilson et al., {\it An Atlas of Group Representations},
http://brauer.maths.qmul.ac.uk/Atlas/.

\end{thebibliography}
\end{document}